\documentclass{amsart}
\usepackage{amscd, amsmath, amsthm, amssymb}
\usepackage{enumerate, comment}
\usepackage{tikz}
\usepackage{pgfplots}
\usepackage[utf8]{inputenc}
\usepgfplotslibrary{ternary}
\usepackage{ifthen} 
\usepackage[bookmarksnumbered,colorlinks, plainpages]{hyperref}
\usepackage[capitalize,noabbrev]{cleveref} 
\usepackage{stackrel}
\hypersetup{
pdftitle={Research Paper},
pdfauthor={Reza Abdolmaleki},
pdfsubject={The non-pure dual exchange property of monomial ideals},
pdfcreator={Reza Abdolmaleki},
colorlinks,
linkcolor=blue,
citecolor=magenta,
anchorcolor=red,
bookmarksopen,
urlcolor=black,
filecolor=red,
}
\theoremstyle{plain}
\newtheorem{Theorem}{Theorem}[section]

\newtheorem{thm}[Theorem]{Theorem}

\newtheorem{cor}[Theorem]{Corollary}
\newtheorem{prop}[Theorem]{Proposition}

\theoremstyle{definition}
\newtheorem{Definition}[Theorem]{Definition}

\newtheorem{defn}[Theorem]{Definition}

\newtheorem{ex}[Theorem]{Example}
\newtheorem{rem}[Theorem]{Remark}
\newtheorem{setup}[Theorem]{Setup}

\theoremstyle{remark}
\newtheorem*{Acknowledgments}{Acknowledgments}

\newcommand{\KK}{\mathbb{K}}

\newcommand{\mm}{\mathfrak{m}}

\def\cocoa{{\hbox{\rm C\kern-.13em o\kern-.07em C\kern-.13em o\kern-.15em A}}}

\newcommand{\calS}{\mathcal{S}}

\newcommand{\fkm}{\mathfrak{m}}

\def\ol{\overline}

\def\BG{BG}

\begin{document}

\title{The Non-Pure Dual Exchange Property in Low Dimensions}

\author{Reza Abdolmaleki}
\address{Department of Mathematics, Institute for Advanced Studies in Basic Sciences (IASBS), Zanjan 45137-66731, Iran}
\address{Department of Mathematics, Lahore University of Management Sciences, DHA, Lahore Cantt. 54792, Lahore, Pakistan}
\email{reza.abd110@gmail.com}
\email{reza.abdolmaleki@lums.edu.pk}

\author{Shinya Kumashiro}
\address{Department of Mathematics, Osaka Institute of Technology, 5-16-1 Omiya, asahi-ku, Osaka, 535-8585, Japan}
\email{shinya.kumashiro@oit.ac.jp}
\email{shinyakumashiro@gmail.com}

\begin{abstract} 
We investigate monomial ideals satisfying the non-pure dual exchange property, a notion introduced in connection with componentwise polymatroidal ideals. Our contributions are twofold. First, we show that in two variables, every integrally closed monomial ideal satisfies this property; as a consequence, we characterize polymatroidal ideals in two variables. Second, for strongly stable (Borel) ideals in three variables, we establish a practical criterion: it suffices to verify the defining condition only for the Borel generators, and this verification reduces to simple inequalities involving the degrees in the second and third variables.
\end{abstract}


\subjclass[2020]{Primary:13F20; Secondary:13A30, 13E15, 13C14.}
\keywords{monomial ideal, exchange property, integrally closed ideal, Borel ideal}
\thanks{}

\maketitle

\section{Introduction}

Exchange-type conditions on monomial generators have played a key role in uncovering the structural and homological behavior of monomial ideals. Notably, polymatroidal and componentwise polymatroidal ideals can be characterized through such conditions and enjoy powerful consequences such as linear quotients and linear resolutions; see, for instance, the survey \cite{HH} and the introduction of \cite{BQ}. 

Motivated by these developments, Bandari and Qureshi \cite{BQ} introduced the \emph{non-pure dual exchange property} (see Definition \ref{d12}) to facilitate the study of componentwise polymatroidal ideals. In particular, they showed that this property coincides with the usual exchange property for single-degree ideals (thus characterizing polymatroidal ideals in that case) and that componentwise polymatroidal ideals satisfy the non-pure dual exchange property. 
 
 In the case of two variables, they further showed that the non-pure dual exchange property, the non-pure exchange property, and componentwise polymatroidality are equivalent. More recently, Bandari and Qureshi \cite{BQ2} established that ideals with the non-pure dual exchange property have linear quotients, and are therefore componentwise linear.
 
Against this background, the goal of this paper is to investigate monomial ideals satisfying the non-pure dual exchange property from a structural perspective, beyond componentwise linearity. Our main contributions are as follows. Let $S = \KK[x_1, \ldots, x_n]$ denote the polynomial ring over a field $\KK$.

\smallskip
\noindent\textbf{(1) Two variables.}
For \(n=2\), we show that every integrally closed monomial ideal satisfies the non-pure dual exchange property (\cref{thm23}), whereas the converse does not hold (\cref{convnot}). As a consequence of \cref{thm23}, we obtain an explicit characterization of polymatroidal ideals in \(\mathbb{K}[x,y]\). In particular, we prove that an equigenerated monomial ideal in two variables is polymatroidal if and only if it is integrally closed (\cref{polymatroidal}). 

\smallskip
\noindent\textbf{(2) Three variables.}
The behavior observed in the two-variable case does not fully extend to
monomial ideals in three variables; see \cref{e29} and \cref{colex}. This
motivates our focus on an important subclass, namely Borel ideals in three
variables. Recall that a monomial ideal \(I\) is a \textit{Borel ideal} if it is
closed under Borel moves: that is, whenever \(u \in I\) is a monomial and
\(x_j \mid u\), one has
\[
\frac{x_i}{x_j}u \in I \quad \text{for all } i<j.
\]

A natural question is whether Borel ideals satisfy the non-pure dual exchange
property. This is indeed the case for \emph{principal Borel ideals}, that is,
Borel ideals with a single Borel generator; see \cref{principal}. However, the
property fails for Borel ideals in general, as shown by the counterexample in
\cref{e37}(b).

The main result in the three-variable case is \cref{t37}. It shows that, for a
Borel ideal \(I \subseteq \KK[x,y,z]\), the non-pure dual exchange property can
be tested solely on the Borel generators of \(I\). Moreover, this reduction
yields a practical criterion in terms of the \(x_2\)- and \(x_3\)-degrees of
those generators. We include examples illustrating how this criterion can be
applied.

Finally, this reduction is specific to the three-variable setting: in higher
dimensions, checking only the Borel generators is no longer sufficient; see
\cref{count-4var}. Thus \cref{t37} is best possible in this sense.

\smallskip
The paper is organized as follows. Section~\ref{sec:two} treats the two-variable case and proves that integrally closed monomial ideals satisfy the non-pure dual exchange property, as well as providing a characterization of polymatroidality. Section~\ref{3} addresses the three-variable case, establishes the reduction to Borel generators for Borel ideals, and derives a concrete degree criterion illustrated with examples.

Throughout, we make the following assumptions.

\begin{setup}
Let $S = \KK[x_1, \ldots, x_n]$ be the polynomial ring over a field $\KK$, equipped with the standard grading (i.e., $\deg(x_i) = 1$ for each $i$). Let $\mm := (x_1, \ldots, x_n)$ denote the graded maximal ideal of $S$. For a monomial ideal $I \subseteq S$, let $G(I)$ (resp. $M(I)$) denote the minimal set of monomial generators of $I$ (resp. the set of all monomials in $I$).

Let $u = x_1^{a_1} \cdots x_n^{a_n}$ be a monomial in $S$. We denote by $\deg(u)$ the total degree of $u$, that is, 
\[
\deg(u) = a_1 + \cdots + a_n.
\]
Similarly, we denote by $\deg_{x_i}(u)$ the exponent of $x_i$ in $u$, i.e., $\deg_{x_i}(u) = a_i$. The vector $(a_1, \ldots, a_n) \in \mathbb{N}^n$ is called the \emph{exponent vector} of $u$. For a monomial ideal $I \subseteq S$, the set of exponent vectors of all monomials in $I$ is called the \emph{exponent set} of $I$ and is denoted by $\mathrm{Exp}(I)$.

\end{setup}

\begin{Definition}(\cite[Definition 1.3]{BQ})\label{d12}
Let $I$ be a monomial ideal in $S$. The ideal $I$ is said to satisfy the \emph{non-pure dual exchange property} if, for all $u, v \in G(I)$ with $\deg(u) \le \deg(v)$ and for all $i$ such that $\deg_{x_i}(v) < \deg_{x_i}(u)$, there exists $j$ with $\deg_{x_j}(v) > \deg_{x_j}(u)$ such that $x_i (v / x_j) \in I$.
\end{Definition}

For simplicity, we say that monomials $u, v \in G(I)$ satisfy \emph{the defining condition of the non-pure dual exchange property} if $\deg(u) \le \deg(v)$ and for all $i$ such that $\deg_{x_i}(v) < \deg_{x_i}(u)$, there exists $j$ with $\deg_{x_j}(v) > \deg_{x_j}(u)$ such that $x_i (v / x_j) \in I$.

\section{The property in two variables}\label{sec:two}

In this section, we assume \( n = 2 \) and write \( x_1 = x \) and \( x_2 = y \). 

Let \( I \subseteq \mathbb{K}[x,y] \) be a monomial ideal. 
We show that if \( I \) is integrally closed, then it satisfies the non-pure dual exchange property (\cref{thm23}). 
However, the converse does not hold (see \cref{convnot}). 
As a consequence, we give an explicit characterization of polymatroidal ideals in \( \mathbb{K}[x,y] \). 
In particular, we show that an equigenerated monomial ideal in \( \mathbb{K}[x,y] \) is polymatroidal if and only if it is integrally closed (\cref{polymatroidal}).

To prepare for this purpose, we introduce some notation. Let 
\[
G(I) = \{u_1, u_2, \ldots, u_r\}.
\] 
Without loss of generality, we may assume that $u_1, \dots, u_r$ have the form
\begin{equation}
\label{1}
u_1 = x^{a_1} y^{b_1}, \; u_2 = x^{a_2} y^{b_2}, \; \dots, \; u_r = x^{a_r} y^{b_r},
\end{equation}
where $a_1 > a_2 > \cdots > a_r$ and $b_1 < b_2 < \cdots < b_r$. 

Let 
\[
\mathcal{S} = \{m_i\}_{i=1}^{r-1}
\] 
be the sequence of ordered pairs defined by $m_i = (a_i - a_{i+1},\; b_i - b_{i+1})$ for $i = 1, \ldots, r-1$. We start with the following proposition.

\begin{prop} 
\label{notproperty}
Let $I$ be a monomial ideal in $S=\KK[x,y]$. Then, the following statements are equivalent:
\begin{enumerate}[\rm(a)]
\item $I$ does not satisfy the non-pure dual exchange property.
\item  After ignoring all ordered pairs of the form $(1, -1)$ in $\mathcal{S}$, the sequence $\mathcal{S}$ contains either a pair of the form $(\alpha, -\beta)$ with $\alpha, \beta \geq 2$, or a subsequence $\mathcal{S}_1 = \{(1, -\beta), (\alpha, -1)\}$ with $\alpha, \beta \geq 2$.
   \end{enumerate}
\end{prop}
\begin{proof}
(b) $\implies$ (a):  Suppose that $\mathcal{S}$ contains a pair of the form $(\alpha, -\beta)$ with $\alpha, \beta \geq 2$. We write $(\alpha, -\beta) = m_\ell$ for some $\ell \in \{1, \ldots , r-1\}$. Then, 
\begin{center}
$u_\ell=x^{a_\ell} y^{b_\ell}=x^{a_{\ell+1}+\alpha} y^{b_{\ell+1}-\beta}$ \qquad and \qquad $u_{\ell+1}=x^{a_{\ell+1}} y^{b_{\ell+1}} = x^{a_\ell - \alpha} y^{b_\ell+\beta}$. 
\end{center}
We then show that $u_\ell, u_{\ell+1}\in G(I)$ or $u_{\ell+1}, u_\ell\in G(I)$ do not satisfy the defining condition of the non-pure dual exchange property. 

Indeed, if $\deg(u_\ell) \leq \deg(u_{\ell+1})$, then we observe that $\deg_{x}(u_{\ell+1}) < \deg_{x}(u_\ell)$ and $x(u_{\ell+1}/y) = x^{a_{\ell}-\alpha+1} y^{b_{\ell}+\beta-1} \notin I$ since $\alpha>1$. 
If $\deg(u_{\ell+1}) \leq \deg(u_{\ell})$, then $\deg_{y}(u_\ell) < \deg_{y}(u_{\ell+1})$ and $y(u_{\ell}/x) = x^{a_{\ell+1}+\alpha-1} y^{b_{\ell+1}-\beta+1}\notin I$ since $\beta\ge 2$. 


Therefore, $I$ does not satisfy the non-pure dual exchange property.

Now, suppose that $\mathcal{S}$ does not include any pair of the form $(\alpha, -\beta)$ with $\alpha, \beta \geq 2$, but after ignoring all ordered pairs of the form $(1, -1)$, contains a subsequence $\mathcal{S}_1 = \{(1, -\beta), (\alpha, -1)\}$ with $\alpha, \beta \geq 2$. We fix indices $i, j \in \{1, \ldots, r-1\}$ with $j > i$, such that
$m_i = (a_i - a_{i+1},\, b_i - b_{i+1}) = (1, -\beta)$ and $m_j = (a_j - a_{j+1},\, b_j - b_{j+1}) = (\alpha, -1)$. We show that $u_i, u_{j+1}\in G(I)$ or $u_{j+1}, u_i\in G(I)$ do not satisfy the defining condition of the non-pure dual exchange property. We note $u_i=x^{a_i} y^{b_i} =x^{a_{i+1}+1} y^{b_{i+1}-\beta}$ and $u_{j+1} = x^{a_{j+1}} y^{b_{j+1}} = x^{a_{j}-\alpha} y^{b_{j}+1}$.

If $\deg(u_i) \leq \deg(u_{j+1})$, then we get $\deg_{x}(u_{j+1}) < \deg_{x}(u_i)$ and $x(u_{j+1}/y) = x^{a_{j}+1-\alpha} y^{b_{j}}\notin I$ since $\alpha>1$. 
If $\deg(u_{j+1}) \leq \deg(u_{i})$, we get $\deg_{y}(u_i) < \deg_{y}(u_{j+1})$ and $y(u_i/x) = x^{a_{i+1}} y^{b_{i+1}+1-\beta}\notin I$ since  $\beta>1$. 

Therefore, $I$ does not satisfy the non-pure dual exchange property.

%

%

(a) $\implies$ (b): Assume that $I$ does not satisfy the non-pure dual exchange property. Then, there exist monomials $u, v \in G(I)$ with $\deg(u) \leq \deg(v)$ such that $u, v \in G(I)$ do not satisfy the defining condition of the non-pure dual exchange property. 

Assume that $\deg_{x}(u) > \deg_{x}(v)$, and fix indices $i, j \in \{1, \ldots, r\}$ with $j > i$ such that 
$u = x^{a_i} y^{b_i}$ and $v = x^{a_j} y^{b_j}$. Then
\begin{align}
\label{2}
x^{a_j+1} y^{b_j-1} \notin I.
\end{align}
It follows that $a_{j-1} - a_j \geq 2$. 
Indeed, if $a_{j-1} - a_j = 1$, then 
\[
x^{a_{j-1}} y^{b_{j-1}} = x^{a_j+1} y^{b_{j-1}}
\] 
divides $x^{a_j+1} y^{b_j-1}$ (since $b_{j-1} \le b_j - 1$), which contradicts \eqref{2}. 
Therefore, we must have $a_{j-1} - a_j \geq 2$.

If $b_j - b_{j-1} \geq 2$, then combining this with the inequality $a_{j-1} - a_j \geq 2$ yields 
\[
(a_{j-1} - a_j,\, b_{j-1} - b_j) = (\alpha, -\beta) \quad \text{with } \alpha, \beta \geq 2,
\] 
as desired.

Now, assume that $b_j - b_{j-1} = 1$ and that there exists no pair $(\alpha, -\beta)$ with $\alpha, \beta \geq 2$ in $\mathcal{S}$. 
Then we have 
\[
(a_{j-1} - a_j,\, b_{j-1} - b_j) = (\alpha, -1) \quad \text{with } \alpha \geq 2.
\] 
Moreover, all other ordered pairs $m_k \in \mathcal{S}$ with $i \le k < j$ are of one of the following forms: 
\[
(1, -1), \quad (\alpha, -1) \text{ with } \alpha \ge 2, \quad \text{or } (1, -\beta) \text{ with } \beta \ge 2.
\]

It is enough to show that there exists an integer $k$ with $i \leq k < j$ such that $m_k = (1, -\beta)$ for some integer $\beta \geq 2$. Suppose that for all $k$ with $i \leq k < j$, we have $m_k = (c_k, -1)$ with $c_k \geq 1$. Note that in particular $c_{j-1} = \alpha \geq 2$. Then, it follows that 
\begin{equation*}
(a_i,b_i)=(a_j,b_j)+\sum_{k=i}^{j-1}m_k=\left(a_j+\sum_{k=i}^{j-1}c_k, \, b_j-(j-i)\right).
\end{equation*}
Therefore, 
\begin{equation*}
\deg(u)=a_i+b_i=a_j+\sum_{k=i}^{j-1}c_k+ b_j-(j-i)=a_j+b_j+\sum_{k=i}^{j-1}c_k-(j-i)=\deg(v)+\sum_{k=i}^{j-1}c_k-(j-i).
\end{equation*}
Since $c_{j-1} \geq 2$, it follows that 
\[
\sum_{k=i}^{j-1} c_k - (j - i) \geq 1.
\]
Hence, 
\[
\deg(v) + \sum_{k=i}^{j-1} c_k - (j - i) > \deg(v),
\]
which is a contradiction. Therefore, there exists an integer $k$ with $i \leq k < j$ such that $m_k = (1, -\beta)$ for some integer $\beta \geq 2$, as desired. 

The case where $\deg_{x}(u) < \deg_{x}(v)$ can be proved similarly. 
\end{proof}

By considering the contraposition of \cref{notproperty}, we obtain the following result. We remark that \cref{okproperty} is essentially known from \cite[Theorem 2.6]{BQ}, although the notation there is different.

\begin{cor}{\rm (cf. \cite[Theorem 2.6]{BQ})}
\label{okproperty}
Let $I$ be a monomial ideal in $S=\KK[x,y]$. Then, the following statements are equivalent:
\begin{enumerate}[\rm(a)]
\item $I$ satisfies the non-pure dual exchange property.
 \item  The sequence $\mathcal{S}$ has the form
\[
\mathcal{S} = \left\{ (\alpha_1, -1), \ldots, (\alpha_s, -1), (1, -\alpha_{s+1}), \ldots, (1, -\alpha_{r-1}) \right\}
\]
with $\alpha_s \geq 1$ for all $s \in \{1, \ldots, r-1\}$.
   \end{enumerate}
\end{cor}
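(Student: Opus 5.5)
The plan is to derive the corollary directly from \cref{notproperty} by contraposition. That reduces the work to a purely combinatorial statement about the sequence $\mathcal{S}$: every entry $m_k=(a_k-a_{k+1},\,b_k-b_{k+1})$ has first coordinate $\ge 1$ and second coordinate $\le -1$. So by \cref{notproperty}, (a) holds if and only if $\mathcal{S}$ has neither of the following features:
\begin{enumerate}[\rm(i)]
\item an entry $(\alpha,-\beta)$ with $\alpha,\beta\ge 2$;
\item after discarding the $(1,-1)$ entries, a subsequence $(1,-\beta),(\alpha,-1)$ with $\alpha,\beta\ge2$, the first term occurring earlier.
\end{enumerate}
It therefore suffices to show that ``not (i) and not (ii)'' is equivalent to the shape described in (b).

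For (b) $\Rightarrow$ (a), I would take $\mathcal{S}$ in the displayed form and check both features directly. Every entry has a coordinate equal to $1$ in absolute value, so (i) cannot occur. Any entry $(1,-\beta)$ with $\beta\ge2$ must sit at a position $>s$, where all later entries are of the form $(1,-\alpha)$. Hence no $(\alpha,-1)$ with $\alpha\ge 2$ follows it, and (ii) fails.

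For (a) $\Rightarrow$ (b), assume neither (i) nor (ii) holds. The absence of (i) forces every $m_k$ to be $(c,-1)$ with $c\ge1$ or $(1,-\beta)$ with $\beta\ge1$. Next, let $s$ be the largest index such that $m_s$ has the form $(\alpha,-1)$ with $\alpha\ge2$; if no such index exists, put $s=0$ and every entry is $(1,-\beta)$. I claim every $m_k$ with $k<s$ has second coordinate $-1$. Otherwise some $m_k=(1,-\beta)$ with $\beta\ge2$ precedes $m_s$, and this is exactly pattern (ii). Every $m_k$ with $k>s$ has first coordinate $1$ by the maximality of $s$. The index $s$ itself is of the form $(\alpha_s,-1)$. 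This gives the required shape.

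The only delicate point is bookkeeping the $(1,-1)$ entries, which fit either block. They are ignored in (ii), and in (b) they may be placed on either side of the split, so they cause no conflict. I expect no real obstacle beyond stating this cleanly.
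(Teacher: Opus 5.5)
Your proposal is correct and follows the paper's route. The paper obtains this corollary simply as the contrapositive of \cref{notproperty}, and you have supplied the short combinatorial check the paper leaves implicit: choose $s$ as the last index with an entry $(\alpha,-1)$, $\alpha\ge 2$, then rule out the two forbidden patterns. Your reading of ``subsequence'' in (ii) as not necessarily consecutive is the one the proof of \cref{notproperty} actually supports, so your step ``this is exactly pattern (ii)'' is justified.
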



Let us recall the notion of integral closedness.

\begin{Definition}(e.g., \cite[Definition 1.1.1]{HS})
Let $I$ be an ideal in a Noetherian ring $R$. An element $z \in R$ is said to be 
\emph{integral over} $I$ if there exist an integer $s$ and elements $a_i \in I^i$, $i = 1, \ldots, s$, 
such that
\[
z^s + a_1 z^{s-1} + a_2 z^{s-2} + \cdots + a_{s-1} z + a_s = 0.
\]

The set of all elements that are integral over $I$ is called the \emph{integral closure} of $I$, and is denoted by $\overline{I}$. If $I = \overline{I}$, then $I$ is called \emph{integrally closed}.
\end{Definition}

\begin{prop}[{\cite[Proposition~1.4.6]{HS}}]
\label{intclose}
The exponent set of the integral closure of a monomial ideal $I$ equals all the integer lattice points in the convex hull of the exponent set of $I$.
\end{prop}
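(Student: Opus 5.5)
The plan is to reduce integrality to a statement about monomials and then to a statement about convex combinations of exponent vectors. Write $C$ for the convex hull of $\mathrm{Exp}(I)$ in $\mathbb{R}^n$. Since $\mathrm{Exp}(I)$ is closed under adding vectors of $\mathbb{N}^n$, we have $C=\mathrm{conv}(\mathrm{Exp}(G(I)))+\mathbb{R}^n_{\ge 0}$. This is a rational polyhedron, which will matter for the rationality step below.

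\textbf{Step 1: $\overline{I}$ is a monomial ideal.} $S$ is $\mathbb{Z}^n$-graded and $I$ is homogeneous for this grading. If $\KK$ is infinite, the torus $(\KK^*)^n$ acts by $x_i\mapsto t_ix_i$ and fixes $I$, hence fixes $\overline{I}$; an ideal stable under this action is spanned by monomials. For arbitrary $\KK$ I would instead use the standard fact that the integral closure of a homogeneous ideal in a $\mathbb{Z}^n$-graded Noetherian ring is homogeneous (cf.\ \cite{HS}). I expect this step to be the only genuinely non-formal point, and I would simply quote it. After Step 1 it suffices to decide which monomials $x^{a}$ lie in $\overline{I}$.

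\textbf{Step 2: monomial criterion.} I will show that $x^a\in\overline{I}$ if and only if $x^{ka}\in I^k$ for some $k\ge 1$.
\begin{itemize}
\item If $x^{ka}\in I^k$, then $z=x^a$ satisfies $z^k-x^{ka}=0$, which is an equation of integral dependence.
\item Conversely, suppose $z^s+a_1z^{s-1}+\cdots+a_s=0$ with $a_i\in I^i$. Since each $I^i$ is a monomial ideal, we may replace each $a_i$ by its component of multidegree $ia$, and the equation still holds in degree $sa$.
\item Comparing the coefficient of $x^{sa}$, the term $x^{sa}$ must be cancelled. So some term $c\,m\,x^{(s-i)a}$ with $m$ a monomial of $I^i$ equals a multiple of $x^{sa}$. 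Hence $x^{ia}=m\in I^i$ for some $i\ge1$.
\end{itemize}

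\textbf{Step 3: convexity.}
\begin{itemize}
\item Suppose $x^{ka}\in I^k$. Then $ka=e_1+\cdots+e_k+w$ with $e_j\in\mathrm{Exp}(G(I))$ and $w\in\mathbb{N}^n$. Therefore $a=\frac1k\sum_j (e_j+w)$, and each $e_j+w$ lies in $\mathrm{Exp}(I)$, so $a\in C$.
\item Conversely, let $a\in C\cap\mathbb{N}^n$. Since $C$ is a rational polyhedron and $a$ is rational, Carathéodory's theorem together with rationality lets us write $a=\sum_j\lambda_j e_j$ with $e_j\in\mathrm{Exp}(I)$, $\lambda_j\in\mathbb{Q}_{\ge0}$ and $\sum_j\lambda_j=1$. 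The solution set of this linear system is a rational polytope, so it contains rational points.
\item Clear denominators with $k$ such that every $k\lambda_j\in\mathbb{N}$. Then $ka=\sum_j (k\lambda_j)e_j$ is a sum of exactly $k$ exponent vectors of $I$, so $x^{ka}\in I^k$.
\end{itemize}

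Combining Steps 1–3 gives $\mathrm{Exp}(\overline{I})=C\cap\mathbb{N}^n$. The only step where care is needed beyond bookkeeping is Step 1 over a finite field; the rest is the direct translation above.
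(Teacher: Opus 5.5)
The paper gives no proof of its own here. It quotes \cite[Proposition~1.4.6]{HS}, and your route is essentially the standard argument given there: monomiality of $\overline{I}$, the criterion that $x^a\in\overline{I}$ if and only if $x^{ka}\in I^k$ for some $k\ge 1$, and clearing denominators in a rational convex combination.

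\textbf{A slip in Step 3.} In the first bullet, the identity $a=\frac1k\sum_j(e_j+w)$ is false unless $w=0$ or $k=1$, because the right-hand side equals $\frac1k\sum_j e_j+w$. Write instead
\[
a=\tfrac1k\bigl((e_1+w)+e_2+\cdots+e_k\bigr),
\]
which is a convex combination of points of $\mathrm{Exp}(I)$. Alternatively, use $C=C+\mathbb{R}^n_{\ge 0}$, which you already noted.

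\textbf{On Step 1.} Step 2 characterizes the monomials of $\overline{I}$ without using Step 1, so the equality of exponent sets does not depend on it. What Step 1 adds is the ability to recover $\overline{I}$ from its exponent set. The paper uses exactly this in the implication (c)$\Rightarrow$(a) of \cref{polymatroidal}, where it concludes $\overline{I}=I$ from the exponent set.
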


\begin{thm}\label{thm23}
Let $I \subset S = \mathbb{K}[x,y]$ be an integrally closed monomial ideal. 
Then $I$ satisfies the non-pure dual exchange property.
\end{thm}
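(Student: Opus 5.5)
We will verify condition (b) of \cref{okproperty}, namely that the sequence $\mathcal{S}$ consists of an initial block of pairs $(\alpha,-1)$ followed by a block of pairs $(1,-\beta)$. The only input is \cref{intclose}: since $I=\overline{I}$, every lattice point of the convex hull of $\mathrm{Exp}(I)$ lies in $\mathrm{Exp}(I)$.

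Write $m_i=(\alpha_i,-\beta_i)$ with $\alpha_i=a_i-a_{i+1}\ge 1$ and $\beta_i=b_{i+1}-b_i\ge 1$, so that $u_{i+1}=u_i\, y^{\beta_i}/x^{\alpha_i}$.

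\textbf{Step 1: convexity of the staircase.} We first show that the slopes $\beta_i/\alpha_i$ are nondecreasing in $i$. Suppose instead that $\beta_i/\alpha_i>\beta_{i+1}/\alpha_{i+1}$ for some $i$. Then the point $(a_{i+1},b_{i+1})$ lies strictly above the segment joining $(a_i,b_i)$ and $(a_{i+2},b_{i+2})$. Hence the triangle with vertices
\[
(a_i,b_i),\qquad (a_{i+2},b_{i+2}),\qquad (a_{i+2},b_{i+1}) \text{ (or its translate by } \mathbb{N}^2)
\]
lies in the convex hull of $\mathrm{Exp}(I)$. This triangle should contain a lattice point $(a,b)$ with $a_{i+2}\le a<a_{i+1}$ and $b<b_{i+1}$, or one with $a<a_i$ and $b<b_{i+1}$ between the two generators. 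Such a point is not divisible by any $u_k$, which contradicts $I=\overline{I}$. Making this lattice-point claim precise is part of the main obstacle below.

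\textbf{Step 2: no pair with $\alpha_i,\beta_i\ge 2$.} Suppose $\alpha_\ell,\beta_\ell\ge2$. The midpoint-type point $(a_{\ell+1}+1,\,b_\ell+1)$ lies on or above the segment from $(a_\ell,b_\ell)$ to $(a_{\ell+1},b_{\ell+1})$. To check this, note the point $(a_{\ell+1}+1,\,b_{\ell+1}-\beta_\ell/\alpha_\ell)$ lies on that segment, and $b_\ell+1\ge b_{\ell+1}-\beta_\ell/\alpha_\ell$ is equivalent to $\beta_\ell/\alpha_\ell\ge \beta_\ell-1$. That inequality can fail, so instead we use the point $(a_{\ell+1}+\lceil \alpha_\ell/\beta_\ell\rceil,\ b_{\ell+1}-1)$. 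It lies above the segment, it belongs to the convex hull, and it is in $I$. Since $\alpha_\ell\ge2$ and $\beta_\ell\ge 2$, we have $\lceil\alpha_\ell/\beta_\ell\rceil<\alpha_\ell$, so the point's $x$-degree is $<a_\ell$ and its $y$-degree is $<b_{\ell+1}$. No generator divides it (generators between $u_\ell$ and $u_{\ell+1}$ do not exist), which is a contradiction.

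\textbf{Step 3: the ordering.} By Step 2 each $m_i$ is either $(\alpha,-1)$ or $(1,-\beta)$. A pair $(1,-\beta)$ with $\beta\ge2$ has slope $\beta\ge 2$, while $(\alpha,-1)$ with $\alpha\ge 2$ has slope $\le 1/2$, and $(1,-1)$ has slope $1$. Monotonicity from Step 1 therefore forces all $(\alpha,-1)$ pairs to precede all $(1,-\beta)$ pairs. This is exactly form (b) of \cref{okproperty}, so $I$ satisfies the non-pure dual exchange property.

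The main obstacle is the lattice-point claim in Step 1. I expect to handle it the same way as Step 2, working only with consecutive pairs. Alternatively, one can bypass convexity of all slopes entirely: it suffices to exclude a pair $(\alpha,-1)$ with $\alpha\ge2$ occurring after a pair $(1,-\beta)$ with $\beta\ge2$ (ignoring $(1,-1)$'s), which is the configuration in \cref{notproperty}. In that configuration, take the generators $u_i$ and $u_{j+1}$ from the proof of \cref{notproperty}. The lattice point $x^{a_{i+1}}y^{b_{i+1}-1}$, or the symmetric one $x^{a_j-1}y^{b_{j+1}}$, should lie in the convex hull of $\{u_i,u_{i+1},u_j,u_{j+1}\}+\mathbb{N}^2$ yet outside $I$. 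I would try this reduction first, since it avoids the general convexity statement.
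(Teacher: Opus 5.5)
Step 1 is false, so Step 3, which depends on it, does not go through. Minimal generators of an integrally closed monomial ideal need not be vertices of the Newton polygon. A generator lying strictly above the boundary can break monotonicity of $\beta_i/\alpha_i$ without producing any lattice point outside $I$. Take $I=(x^2,xy^2,y^3)$. It is integrally closed: the only nontrivial check is at $x$-degree $1$, where the hull requires $y$-degree $\ge 3/2$, and $xy^2\in I$. Yet $\mathcal{S}=\{(1,-2),(1,-1)\}$ has slopes $2$ then $1$. Here the third vertex $(a_{i+2},b_{i+1})$ of your triangle is $y^2\notin\overline{I}$. The paper's own example $(x^7,x^5y,x^4y^2,x^2y^3,xy^6,y^8)$ is another integrally closed instance, with $\mathcal{S}=\{(2,-1),(1,-1),(2,-1),(1,-3),(1,-2)\}$. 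So no lattice-point argument can rescue Step 1.

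The fallback you mention at the end is the right route. Together with Step 2 it is exactly the paper's proof:
\begin{itemize}
\item Step 2 is the paper's case (i). The paper uses the witness $x^{a_\ell-1}y^{b_{\ell+1}-1}$; yours is also valid.
\item The fallback is case (ii), with the same witness $u_{i+1}/y$.
\end{itemize}
To finish it, take $i<j$ with $m_i=(1,-\beta)$, $m_j=(\alpha,-1)$, $\alpha,\beta\ge 2$, and $k:=j-i$ minimal. Then Step 2 forces only $(1,-1)$'s in between. In exponent coordinates:
\begin{itemize}
\item $u_i=(a,b)$, $u_{i+1}=(a-1,b+\beta)$, and $u_{j+1}=(a-k-\alpha,\,b+\beta+k)$.
\item $u_{i+1}/y=(a-1,b+\beta-1)$ lies on or above the segment from $u_i$ to $u_{j+1}$ if and only if $(\beta-1)(k+\alpha)\ge \beta+k$.
\item That inequality is $k(\beta-2)+(\alpha-1)(\beta-1)-1\ge 0$, which holds.
\end{itemize}
Since $u_{i+1}/y\notin I$ by minimality of $u_{i+1}$, this contradicts $I=\overline{I}$ via \cref{intclose}.

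Discard your ``symmetric'' point $x^{a_j-1}y^{b_{j+1}}$. It is divisible by $u_{j+1}$, hence lies in $I$; the correct mirror witness is $u_j/x$. Your choice of $u_{j+1}$ as the second generator is the one that works. The paper's text pairs $u_i$ with $x^{a_j}y^{b_j}$, which fails when $j=i+1$, for example for $(x^3,x^2y^2,y^3)$.
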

\begin{proof}
 Assume that $I$ is integrally closed. If $I$ does not satisfy the non-pure dual exchange property, by \cref{notproperty}, there exist integers $\alpha, \beta\geq 2$ and a subsequence of $\calS$ such that: 
\begin{center}
 (i) $\{(\alpha, -\beta)\}$ \quad  or \quad  (ii) $\{(1, -\beta), (1, -1), \dots,(1, -1), (\alpha, -1)\}$. 
\end{center}

 Case (i): Let $m_i = (a_i - a_{i+1},\; b_i - b_{i+1})= (\alpha, -\beta)$ for some $i \in\{ 1, \ldots, r-1\}$. Then, we show that $x^{a_{i}-1}y^{b_{i+1}-1}\not\in I$. Indeed, if $x^{a_{i}-1}y^{b_{i+1}-1}\in I$, then there exists $u_j=x^{a_j}y^{b_j}\in G(I)$ such that $u_j | x^{a_{i}-1}y^{b_{i+1}-1}$, that is, $a_j\le a_{i}-1<a_i$ and $b_j\le b_{i+1}-1<b_{i+1}$, which is impossible (see \eqref{1}). Thus, $x^{a_{i}-1}y^{b_{i+1}-1}\not\in I$. On the other hand, since we have 
\begin{center}
$u_i=x^{a_i}y^{b_i} =x^{a_{i}}y^{b_{i+1}-\beta}\in G(I)$, \quad $u_{i+1}=x^{a_{i+1}}y^{b_{i+1}} =x^{a_{i}-\alpha}y^{b_{i+1}}\in G(I)$, \quad and $\alpha, \beta\ge 2$, 
\end{center}
by \cref{intclose}, it follows that
\[
x^{a_{i}-1}y^{b_{i+1}-1}\in \ol{(u_i, u_{i+1})} \subseteq \ol{I} = I,
\] 
a contradiction. 

 Case (ii): Choose $1\le i<j\le r-1$ such that $m_i=(1, -\beta)$ and $m_{j}=(\alpha, -1)$. 
We note that $x^{a_{i}-1}y^{b_{i}+\beta-1} = x^{a_{i+1}}y^{b_{i+1}-1}=(u_{i+1}/y)\notin I$. On the other hand, since $\alpha, \beta\ge 2$, one can easily check that 
\[
x^{a_{i}-1}y^{b_{i}+\beta-1}\in \ol{(x^{a_i}y^{b_i}, x^{a_j}y^{b_j})} \subseteq \ol{I} =I.
\]
This is a contradiction. 

Hence, in both cases, we obtain a contradiction. 
It follows that $\calS$ has the desired form, and therefore $I$ satisfies the non-pure dual exchange property.

\end{proof}

\begin{ex}
The ideal 
\[
I = (x^7,\, x^5y,\, x^4y^2,\, x^2y^3,\, xy^6,\, y^8)
\]
satisfies the non-pure dual exchange property. 

The exponent set of the minimal generators of \(I\) is
\[
\{ 
v_1=(7,0),\, v_2=(5,1),\, v_3=(4,2),\, v_4=(2,3),\, v_5=(1,6),\, v_6=(0,8)\}.
\]
Figure~\ref{fig:int} illustrates the lattice points in the convex hull of the exponent vectors of $I$ (equivalently, the exponent set of the integral closure of $I$). This figure visually confirms the property described in Theorem~\ref{thm23}.
\end{ex}

\begin{figure}[h!]
\centering
\begin{tikzpicture}[scale=0.6,>=stealth] 
  
   \draw[->] (-0.5,0) -- (8.5,0) node[right] {$x$-exponent};
  \draw[->] (0,-0.5) -- (0,9.5) node[above] {$y$-exponent};

  \foreach \x in {0,1,...,8}
    \draw (\x,0) -- (\x,-0.15) node[below,font=\small] {\x};
  \foreach \y in {0,1,...,9}
    \draw (0,\y) -- (-0.15,\y) node[left,font=\small] {\y};

  \foreach \x/\y/\name in {7/0/v_1, 5/1/v_2, 4/2/v_3, 2/3/v_4, 1/6/v_5, 0/8/v_6}{
    \filldraw[red] (\x,\y) circle (2pt);
    \node[anchor=west,font=\small] at (\x+0.2,\y) {$\name$};
  }

   \draw[thick,blue]
    (7,0) -- (5,1) -- (4,2) -- (2,3) -- (1,6) -- (0,8);

  \fill[blue!20,opacity=0.5]
    (7,0) -- (5,1) -- (4,2) -- (2,3) -- (1,6) -- (0,8)
    -- (0,9.5) -- (8.5,9.5) -- (8.5,0) -- cycle;

  \draw[step=1cm,gray,very thin] (0,0) grid (8,9);

\end{tikzpicture}

\vspace{2mm}
\caption{The exponent set of the integral closure of $I$.}
\label{fig:int}
\end{figure}

\begin{rem}
\label{convnot}
The converse of \cref{thm23} does not hold in general. 
For instance, consider the ideal
\[
I = (x^7, x^6y, x^5y^2, x^4y^3, y^4) \subset \KK[x,y].
\]
It is easy to check that $I$ satisfies the non-pure dual exchange property, 
but it is not integrally closed, since 
\[
x^3y^3 \in \overline{I} \setminus I.
\]
\end{rem}

The following example shows that the assertion of Theorem~\ref{thm23} may fail when \( n \ge 3 \):

\begin{ex}\label{e29}
 Consider the ideal \( I = (x^2, xy, xz, y^2) \subset \KK[x, y, z] \).  
Set \( u = xz \) and \( v = y^2 \). Since $\deg_z (u) > \deg_z(v)$ and  $z(v/y) = yz \notin I$, 
the ideal \( I \) does not satisfy the non-pure dual exchange property.  
However, it is easy to verify (either manually or using CoCoA~\cite{Co}) that \( I \) is integrally closed.
\end{ex}

We recall from \cite[Proposition~1.4]{BQ} that an equigenerated monomial ideal is \textit{polymatroidal} if and only if it satisfies the non-pure dual exchange property. Polymatroidal ideals have been well studied; see, for example, \cite[Chapter 12]{HH}. Theorem~\ref{thm23} yields the following consequence, which characterizes polymatroidal ideals in $\KK[x,y]$.

\begin{thm}
\label{polymatroidal}
Let $I \subset \mathbb{K}[x,y]$ be an equigenerated monomial ideal in degree $d$. The following are equivalent:
\begin{enumerate}[\rm(a)]
\item $I$ is integrally closed.
\item $I$ is polymatroidal.
\item 
\[
I=(x^iy^{d-i} \mid d_1\le i \le d_2)
\]
for some integers $d_1, d_2$ with $0\le d_1 \le d_2\le d$.
\end{enumerate}
\end{thm}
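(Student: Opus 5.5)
I will prove the cycle (a) $\Rightarrow$ (b) $\Rightarrow$ (c) $\Rightarrow$ (a), using \cref{thm23}, \cite[Proposition~1.4]{BQ}, \cref{okproperty} and \cref{intclose}.

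\emph{(a) $\Rightarrow$ (b).} If $I$ is integrally closed, then \cref{thm23} shows that $I$ satisfies the non-pure dual exchange property. Since $I$ is equigenerated, \cite[Proposition~1.4]{BQ} then says $I$ is polymatroidal.

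\emph{(b) $\Rightarrow$ (c).} Since $I$ is polymatroidal, \cite[Proposition~1.4]{BQ} gives the non-pure dual exchange property, so \cref{okproperty} applies. Write $G(I)=\{u_1,\dots,u_r\}$ as in \eqref{1}. Because all generators have degree $d$, we have $a_i+b_i=d$ for every $i$. Hence each pair in $\mathcal{S}$ has the form $m_i=(c,-c)$ with $c=a_i-a_{i+1}\ge 1$. By \cref{okproperty}, each $m_i$ is either $(\alpha,-1)$ or $(1,-\alpha)$, and both forms force $c=1$. So every $m_i=(1,-1)$, which means the $x$-exponents $a_1>\dots>a_r$ are consecutive integers. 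Setting $d_2=a_1$ and $d_1=a_r$ gives exactly the form in (c). The case $r=1$ is trivial, with $d_1=d_2$.

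\emph{(c) $\Rightarrow$ (a).} I will use \cref{intclose}. The exponent set of $I$ is $\bigcup_i\big((i,d-i)+\mathbb{N}^2\big)$ for $d_1\le i\le d_2$. Its convex hull is contained in
\[
C=\{(p,q)\in\mathbb{R}_{\ge0}^2 : p+q\ge d,\ p\ge d_1,\ q\ge d-d_2\},
\]
because $C$ is convex and contains every exponent vector of $I$. Now take any lattice point $(p,q)\in C$. Set $i=\min(p,d_2)$; then $d_1\le i\le d_2$.
\begin{itemize}
\item If $p\le d_2$, then $i=p$ and $q\ge d-p=d-i$.
\item If $p>d_2$, then $i=d_2$ and $q\ge d-d_2=d-i$.
\end{itemize}
In both cases $x^py^q$ is divisible by $x^iy^{d-i}$, so it lies in $I$. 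Therefore $\ol{I}=I$.

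None of these steps is a serious obstacle. The one point that needs care is describing the convex hull in (c) $\Rightarrow$ (a) precisely enough; the half-plane description $C$ above handles it directly.
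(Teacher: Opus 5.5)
Your proposal is correct and follows the paper's proof essentially step for step: the same cycle (a)$\Rightarrow$(b)$\Rightarrow$(c)$\Rightarrow$(a), using \cref{thm23} with \cite[Proposition~1.4]{BQ}, then \cref{okproperty} together with the equal-degree constraint to force every $m_i=(1,-1)$, and finally \cref{intclose}. The only difference is that your half-plane set $C$ in (c)$\Rightarrow$(a) writes out explicitly the convex-hull check that the paper states in a single line.
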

\begin{proof}
(a) $\implies$  (b)
Let $I$ be an equigenerated integrally closed monomial ideal. By Theorem~\ref{thm23}, $I$ satisfies the non-pure dual exchange property and hence, by \cite[Proposition~1.4]{BQ}, is polymatroidal.

(b)$\implies$ (c) By \cref{okproperty}, the sequence $\mathcal{S}$ has the form
\[
\mathcal{S} = \left\{ (\alpha_1, -1), \ldots, (\alpha_s, -1), (1, -\alpha_{s+1}), \ldots, (1, -\alpha_{r-1}) \right\}
\]
with $\alpha_i \geq 1$ for all $i \in \{1, \ldots, r-1\}$. Since $I$ is equigenerated, $\alpha_i=1$ for all $i$. This implies the assertion.

%

(c) $\implies$ (a) Let
\[
I=(x^iy^{d-i} \mid d_1\le i \le d_2).
\]
Then the exponent set of minimal generators of $I$ coincides with all integer lattice points on a line segment. Hence, by \cref{intclose}, $I$ is integrally closed.
\end{proof}

\begin{rem}
A classical theorem of Zariski asserts that, in a two-dimensional regular local ring, the product of two integrally closed ideals is again integrally closed (see, e.g., \cite[Theorem 14.4.4]{HS}). Hence, by \cref{polymatroidal}, the class of polymatroidal ideals in $\KK[x,y]$ is closed under products. This closure property is also a special case of \cite[Theorem~12.6.3]{HH}.
\end{rem}

%

We also apply \cref{thm23} to the notion of saturation. 
Recall that in a Noetherian graded ring $R$ having the unique graded maximal ideal $\mm$, the ideal 
\[
I:\mm^{\infty} = \bigcup_{k \geq 0} (I : \fkm^k)
\] 
is called the \textit{saturation} of $I$ and is denoted by $I^{\text{sat}}$ (\cite[Section 1.2.2]{HH}).


\begin{rem}\label{rem211}
If $I$ is an integrally closed ideal, then so is $I:\fkm$. Indeed, by {\cite[Remark~1.3.2 (2)]{HS}}, we have
\[
\overline{I : \mm} \subseteq \overline{I} : \mm = I : \mm.
\]
The reverse inclusion \( I : \mm \subseteq \overline{I : \mm} \) always holds. 
\end{rem}

\begin{cor}\label{colon}
$I:\fkm$ and $I^{\text{sat}}$ satisfy the non-pure dual exchange property for all integrally closed monomial ideals $I$ in $\KK[x,y]$. 
\end{cor}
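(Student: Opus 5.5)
The plan is to reduce everything to \cref{thm23} by showing that $I:\fkm$ and $I^{\text{sat}}$ are themselves integrally closed monomial ideals in $\KK[x,y]$.

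\textbf{The ideal $I:\fkm$.} Since $I$ and $\fkm$ are monomial ideals, $I:\fkm$ is a monomial ideal. By \cref{rem211} it is integrally closed. \cref{thm23} then gives the non-pure dual exchange property directly.

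\textbf{The saturation.} First I will show that $I:\fkm^k$ is integrally closed for every $k\ge 0$. I would use induction on $k$, via the identity
\[
I:\fkm^{k+1}=(I:\fkm^k):\fkm
\]
together with \cref{rem211} applied at each step. Since $S$ is Noetherian, the ascending chain $I\subseteq I:\fkm\subseteq I:\fkm^2\subseteq\cdots$ stabilizes. Hence $I^{\text{sat}}=I:\fkm^k$ for some $k$, so $I^{\text{sat}}$ is an integrally closed monomial ideal, and \cref{thm23} applies again.

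\textbf{Edge cases.} In two variables $I^{\text{sat}}$ may equal $S$ (for instance when $I$ is $\fkm$-primary), or some colon may become the unit ideal. In that case $G(S)=\{1\}$, and the defining condition is vacuous because no index $i$ satisfies $\deg_{x_i}(v)<\deg_{x_i}(u)$ when $u=v$. So the statement holds trivially. Similarly, if $I=0$ there is nothing to prove. With these cases handled, the argument is just this bookkeeping around \cref{rem211}, and I do not expect a real obstacle.
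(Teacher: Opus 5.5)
Your proposal is correct and matches the paper's argument: \cref{rem211} together with \cref{thm23} handles $I:\fkm$, and induction via $I:\fkm^{k}=(I:\fkm^{k-1}):\fkm$ handles the saturation. Your version is slightly more careful than the paper's, since you state that the induction is on integral closedness and that the ascending chain stabilizes, so that $I^{\text{sat}}=I:\fkm^{k}$ for some $k$.
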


\begin{proof}
$I:\fkm$ satisfies the non-pure dual exchange property by \cref{rem211} and \cref{thm23}. Since $I:\mm^k = (I:\mm^{k-1}):\mm$, using induction, it follows that \( I^{\text{sat}} \) also satisfies the non-pure dual exchange property.
\end{proof}

The statement of \cref{colon} does not hold in polynomial rings with more than two variables:

\begin{ex}
\label{colex}
The ideal $I = (x^3, x^2y, xyz, x^2z^5, y^3z^3) \subset \KK[x,y,z]$ is integrally closed. One can check (for instance, using CoCoA (\cite{Co})) that 
\[
G(I:\mm) = \{x^3, x^2y, xyz, x^2z^4, y^3z^3\}.
\] 
Consider $u = x^2z^4$ and $v = y^3z^3$. Then $\deg(u) = \deg(v)$ and $\deg_{z}(u) > \deg_{z}(v)$, but $z(v/y) = y^2z^4 \notin I:\mm$. Therefore, $I:\mm$  does not satisfy the non-pure dual exchange property.
\end{ex}

\section{The property in three variables}
\label{3}
In this section, we study the non-pure dual exchange property in dimension three.
As we saw in the previous section, not every result from the two-variable case extends to monomial ideals in three variables (see \cref{e29} and \cref{colex}).
Therefore, we focus on an important class of monomial ideals, namely Borel ideals in three variables.
The main objective of this section is to establish \cref{t37}, which provides a simpler criterion for the non-pure dual exchange property. More precisely, we prove that for a Borel ideal \(I\) in three variables, it is enough to check the defining condition of the property only on the Borel generators of \(I\).


\begin{defn}
An ideal \( I \subseteq S = \KK[x_1, \ldots, x_n] \) is called \emph{strongly stable} (or a \emph{Borel ideal}) if \( I \) is closed under Borel moves. 
That is, if \( u \in M(I) \) and \( x_j \mid u \), then \( \frac{x_i}{x_j}u \in I \) for all \( i < j \). 
For a Borel ideal \( I \), the \emph{Borel generators} of \( I \), denoted by \( \BG(I) \), form the unique minimal set of monomials whose Borel moves generate \( I \). If \( u_1, \ldots, u_m \) are the Borel generators of \( I \), we write \( I = B(u_1, \ldots, u_m) \). A Borel ideal with a single Borel generator is called a \emph{principal Borel ideal}. 
\end{defn}

In this section, let \( n = 3 \), and we set \( x_1 = x \), \( x_2 = y \), and \( x_3 = z \); that is, \( S = \KK[x, y, z] \). We divide the conditions of the non-pure dual exchange property as follows.

\begin{defn}\label{1.2}
Let $u,v$ be monomials with $\deg(u) \le \deg(v)$. We define the following conditions on $u$ and $v$:
\begin{enumerate}
\item[\rm(a)] If $\deg_x(u) > \deg_x(v)$, then at least one of the following assertions holds:
\begin{enumerate}
\item[\rm(a-1)]  $\deg_y(u) < \deg_y(v)$ and $\dfrac{x}{y}v\in I$, or 
\item[\rm(a-2)]  $\deg_z(u) < \deg_z(v)$ and $\dfrac{x}{z}v\in I$.
\end{enumerate}
\item[\rm(b)]  If $\deg_y(u) > \deg_y(v)$, then at least one of the following assertions holds:
\begin{enumerate}
\item[\rm(b-1)]  $\deg_x(u) < \deg_x(v)$ and $\dfrac{y}{x}v\in I$, or 
\item[\rm(b-2)]  $\deg_z(u) < \deg_z(v)$ and $\dfrac{y}{z}v\in I$.
\end{enumerate}
\item[\rm(c)]  If $\deg_z(u) > \deg_z(v)$, then at least one of the following assertions holds:
\begin{enumerate}
\item[\rm(c-1)]  $\deg_x(u) < \deg_x(v)$ and $\dfrac{z}{x}v\in I$, or 
\item[\rm(c-2)]  $\deg_y(u) < \deg_y(v)$ and $\dfrac{z}{y}v\in I$.
\end{enumerate}
\end{enumerate}

\end{defn}

Of course, \( I \) satisfies the non-pure dual exchange property if and only if, for all \( u, v \in G(I) \) with \( \deg(u) \le \deg(v) \), the monomials \( u \) and \( v \) satisfy the conditions (a), (b), and (c).

\begin{rem}\label{r1.3}
Suppose that $ I \subseteq S $ is a Borel ideal. Then, $I$ satisfies the non-pure dual exchange property  if and only if, for all $u,v\in G(I)$ with $\deg(u)\le \deg(v)$, the monomials $u$ and $v$ satisfy the conditions (b) and (c).
\end{rem}

\begin{proof}
We show that if \( I \) is a Borel ideal, then condition (a) is automatically satisfied for all \( u, v \in G(I) \) with \( \deg(u) \le \deg(v) \). 
Assume that \( \deg_x(u) > \deg_x(v) \). Since \( \deg(u) \le \deg(v) \), at least one of the inequalities \( \deg_y(u) < \deg_y(v) \) or \( \deg_z(u) < \deg_z(v) \) must hold. 
If \( \deg_y(u) < \deg_y(v) \), then \( \deg_y(v) > 0 \); hence, since \( I \) is a Borel ideal, we get \( \frac{x}{y}v \in I \). 
Similarly, if \( \deg_z(u) < \deg_z(v) \), then \( \frac{x}{z}v \in I \).
\end{proof}

\begin{prop}\label{p1.4}
Let $ I \subseteq S $ be a Borel ideal. The following conditions are equivalent:

\begin{enumerate}[\rm(1)] 
\item $I$ satisfies the non-pure dual exchange property.
\item For all $u\in G(I)$ and $v\in \BG(I)$ with $\deg(u)\le \deg(v)$, the monomials $u$ and $v$ satisfy the conditions (b) and (c) of \cref{1.2}.
\end{enumerate}
\end{prop}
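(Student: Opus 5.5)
Since $\BG(I)\subseteq G(I)$, the implication (1)$\Rightarrow$(2) follows at once from \cref{r1.3}. The work is in (2)$\Rightarrow$(1). By \cref{r1.3} it suffices to show that conditions (b) and (c) of \cref{1.2} hold for every pair $u,v\in G(I)$ with $\deg(u)\le\deg(v)$. I will argue by induction on how far $v$ is from the Borel generators. Fix $d=\deg(v)$ and set $\varphi(w)=\deg_y(w)+2\deg_z(w)$. An elementary Borel move $w\mapsto \frac{x_i}{x_j}w$ with $i<j$ strictly decreases $\varphi$. The plan is a descending induction on $\varphi(v)$ among the degree-$d$ elements of $G(I)$. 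The base case consists of those $v\in\BG(I)$, which is exactly hypothesis (2).

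The first step is a structural lemma. If $v\in G(I)\setminus\BG(I)$, then $v=\frac{x_i}{x_j}v'$ for some $i<j$ and some $v'\in G(I)$ with $\deg(v')=\deg(v)$. To see this, write $v$ as divisible by a Borel image of some Borel generator $w$. Minimality of $v$ forces $v$ to equal that image, so there is a chain of elementary moves of constant degree, and we let $v'$ be the predecessor of $v$ in the chain. To show $v'\in G(I)$, suppose some $g\in I$ divides $v'$ properly. Then either $g$ divides $v'/x_j$, or $\frac{x_i}{x_j}g\in I$, and in both cases we obtain an element of $I$ properly dividing $v$, a contradiction. Since $\varphi(v')>\varphi(v)$, the induction hypothesis tells us that $(u,v')$ satisfies (b) and (c). Note also that $\deg(u)\le\deg(v')$.

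The second step transfers (b) and (c) from $(u,v')$ to $(u,v)$. There are three kinds of move: $v=\frac{x}{y}v'$, $v=\frac{x}{z}v'$, and $v=\frac{y}{z}v'$. In each case, and for each of the conditions (b) and (c), I split according to whether the relevant exponent inequality for $u$ against $v$ already held against $v'$, or appears only because of the move, with a difference of exactly one. As an illustration, take (b) with $v=\frac{x}{y}v'$.
\begin{itemize}
\item Suppose $\deg_y(u)>\deg_y(v')$. Then (b) for $(u,v')$ supplies an exchange.
\begin{itemize}
\item If the exchange is via $z$, then $\deg_z(v)=\deg_z(v')>\deg_z(u)$, and $\frac{y}{z}v=\frac{x}{y}\bigl(\frac{y}{z}v'\bigr)\in I$ by the Borel property.
\item If the exchange is via $x$, then $\deg_x(v)>\deg_x(u)$ and $\frac{y}{x}v=v'\in I$.
\end{itemize}
\item Suppose instead $\deg_y(u)=\deg_y(v')$. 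We still have $\frac{y}{x}v=v'\in I$.
\begin{itemize}
\item If $\deg_x(v)>\deg_x(u)$, we are done.
\item Otherwise $\deg_x(u)\ge\deg_x(v)$ and $\deg_y(u)=\deg_y(v)+1$. The degree inequality $\deg(u)\le\deg(v)$ then forces $\deg_z(v)>\deg_z(u)$, and $\frac{y}{z}v=\frac{x}{z}v'\in I$ by the Borel property.
\end{itemize}
\end{itemize}
The remaining cases follow the same pattern. The required exchange monomial for $v$ is either $v'$ itself, or a Borel move of $v'$, or a Borel move of the exchange monomial supplied for $v'$. Whenever the strict exponent inequality is lost, the degree comparison $\deg(u)\le\deg(v)$ recovers it in another variable.

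I expect the main obstacle to be condition (c) when the move $v=\frac{x}{z}v'$ or $v=\frac{y}{z}v'$ lowers the $z$-degree. Here one must exchange $z$ into $v$, and the Borel property only moves toward smaller indices, so $\frac{z}{x}v$ or $\frac{z}{y}v$ cannot be obtained from a Borel move alone. I would first try to use $\frac{z}{x}v=\frac{z}{x}\cdot\frac{x}{z}v'=v'$ (respectively $\frac{z}{y}v=v'$). This requires checking that $\deg_x(v)>\deg_x(u)$ (respectively $\deg_y(v)>\deg_y(u)$). If that fails, I would fall back on the exchange given by (c) for $(u,v')$ and apply a Borel move to it, using the degree count to control the exponents.
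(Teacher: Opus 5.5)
Your structural lemma (a minimal generator $v\notin\BG(I)$ has the form $\frac{x_i}{x_j}v'$ with $v'\in G(I)$ of the same degree) is correct, $\varphi$ makes the induction well founded, and the cases you work out are fine. The gap is your claim that the remaining cases follow the same pattern, i.e.\ that the witness for $v$ is always $v'$, a Borel move of $v'$, or a Borel move of the witness supplied for $(u,v')$.

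This fails for condition (b) under the move $v=\frac{y}{z}v'$ when $\deg_y(u)>\deg_y(v)$ and $\deg_z(u)=\deg_z(v)=\deg_z(v')-1$.
\begin{itemize}
\item Here (b-2) is unavailable for $(u,v)$, so you need $\frac{y}{x}v\in I$. The inequality $\deg_x(u)<\deg_x(v)$ does follow from the degree count.
\item However, the induction hypothesis for $(u,v')$ is fulfilled by (b-2) for trivial reasons, since $\deg_z(u)<\deg_z(v')$ and $\frac{y}{z}v'=v\in I$. So you cannot extract (b-1) for $(u,v')$ from it.
\item Moreover, $\frac{y}{x}v=\frac{y^2}{xz}v'$ is not a Borel move of $v'$ or of $v$, because it lowers the $x$-degree.
\end{itemize}
A smaller hole of the same kind sits in your plan for (c) under this move. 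If $\deg_y(u)\ge\deg_y(v)$ and $\deg_z(u)=\deg_z(v')$, then (c) for $(u,v')$ is not triggered, so your fallback does not apply. There you would have to use (b) for $(u,v')$, where (b-2) is now excluded, to get $\frac{z}{x}v=\frac{y}{x}v'\in I$.

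The missing idea is to build the witness from $u$ instead of from $v'$.
\begin{itemize}
\item Suppose $\deg_x(u)<\deg_x(v)$, $\deg_y(u)>\deg_y(v)$ and $\deg_z(u)\ge\deg_z(v)$. Then $\frac{y}{x}v$ arises from $x^{\deg(v)-\deg(u)}u$ by $\deg_z(u)-\deg_z(v)$ moves $\frac{y}{z}$, followed by $(\deg_y(u)-\deg_y(v)-1)+(\deg_z(u)-\deg_z(v))$ moves $\frac{x}{y}$.
\item Likewise, if $\deg_x(u)<\deg_x(v)$ and $\deg_z(u)>\deg_z(v)$, then $\frac{z}{x}v$ arises from $y^{\deg(v)-\deg(u)}u$ by Borel moves.
\end{itemize}
This is exactly how the paper argues. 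It obtains (b) for every pair in $G(I)$, and (c) whenever $\deg_x(u)<\deg_x(v)$, with no induction at all.

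In the remaining case of (c) one needs $\frac{z}{y}v\in I$. The paper writes $v$ as a Borel image of some $w\in\BG(I)$. If a move involving $z$ occurs, then $\frac{z}{y}v$ is again a Borel image of $w$. Otherwise $v=(\frac{x}{y})^{\alpha}w$, and hypothesis (2) for $(u,w)$ forces (c-2), i.e.\ $\frac{z}{y}w\in I$, whence $\frac{z}{y}v\in I$.

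Once you add the direct construction from $u$, your step-by-step induction also closes. In the remaining case of (c), each of the three moves is handled by $v'$ itself, by the Borel move $\frac{x}{y}v'$, or by a Borel move of the (c-2) witness for $(u,v')$. At that point, however, the induction is more machinery than the statement needs.
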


\begin{proof}
(1) $\Longrightarrow$ (2) is clear by Remark~\ref{r1.3}.

(2) $\Longrightarrow$ (1): We prove that for all $u\in G(I)$ and $v\in G(I)\setminus \BG(I)$ with $\deg(u)\le \deg(v)$, the monomials $u$ and $v$ satisfy the conditions (b) and (c). Write $u=x^{a_1}y^{a_2}z^{a_3}$ and $v=x^{b_1}y^{b_2}z^{b_3}$ for some non-negative integers $a_1, a_2, a_3, b_1, b_2, b_3$. 

Condition (b): Assume that $\deg_y(u) > \deg_y(v)$. Since $\deg(u)\le \deg(v)$, at least one of the conditions $\deg_x(u)< \deg_x(v)$ or $\deg_z(u)< \deg_z(v)$ holds. If $\deg_z(u)< \deg_z(v)$, then $\dfrac{y}{z}v\in I$ since $I$ is a Borel ideal. Thus, we may assume that $\deg_x(u)< \deg_x(v)$ and $\deg_z(u)\ge  \deg_z(v)$. That is, we have $a_1+a_2+a_3\le b_1+b_2+b_3$, $a_1<b_1$, $a_2>b_2$, and $a_3\ge b_3$. Hence, we get 
\[
\dfrac{y}{x} v = x^{(b_1+b_2+b_3)-(a_1+a_2+a_3)} \left(\dfrac{x}{y}\right)^{((a_2-b_2-1)+(a_3-b_3))}\left(\dfrac{y}{z}\right)^{a_3-b_3}u\in I. 
\]

Condition (c): Assume that $\deg_z(u) > \deg_z(v)$. Since $\deg(u)\le \deg(v)$, at least one of the conditions $\deg_x(u)< \deg_x(v)$ or $\deg_y(u)< \deg_y(v)$ holds. Assume that $\deg_x(u)< \deg_x(v)$. Thus, we have $a_1+a_2+a_3\le b_1+b_2+b_3$, $a_1<b_1$, and $a_3>b_3$. Hence, we get 
\[
\dfrac{z}{x} v = y^{(b_1+b_2+b_3)-(a_1+a_2+a_3)} \left(\dfrac{x}{y}\right)^{b_1-a_1-1}\left(\dfrac{y}{z}\right)^{a_3-b_3-1}u\in I. 
\]
Therefore, we may assume that $\deg_x(u)\ge \deg_x(v)$. Then, $\deg_y(u)< \deg_y(v)$. Thus, we have $a_1\ge b_1$, $a_2<b_2$, and $a_3>b_3$. Note that there exists $w\in \BG(I)$ such that $v$ is obtained from Borel moves on $w$, that is, there exist non-negative integers $\alpha_1, \alpha_2, \alpha_3$ such that
\[
v=\left(\dfrac{x}{y}\right)^{\alpha_1}\left(\dfrac{x}{z}\right)^{\alpha_2}\left(\dfrac{y}{z}\right)^{\alpha_3}w.
\]
If $\alpha_2>0$, then $\dfrac{z}{y}v = \left(\dfrac{x}{y}\right)^{\alpha_1+1}\left(\dfrac{x}{z}\right)^{\alpha_2-1}\left(\dfrac{y}{z}\right)^{\alpha_3}w\in I$. If $\alpha_3>0$, then $\dfrac{z}{y}v = \left(\dfrac{x}{y}\right)^{\alpha_1}\left(\dfrac{x}{z}\right)^{\alpha_2}\left(\dfrac{y}{z}\right)^{\alpha_3-1}w\in I$. Hence, we may assume that $\alpha_2=\alpha_3=0$. We have $\alpha_1>0$ since $v\not\in \BG(I)$.
Then, $w=x^{b_1-\alpha_1}y^{b_2+\alpha_1}z^{b_3}$. 
It follows that $\deg(u) \le \deg(v)=\deg(w)$, $\deg_x(u)\ge \deg_x(v)>\deg_x(w)$, $\deg_y(u)<\deg_y(v)<\deg_y(w)$, and $\deg_z(u)>\deg_z(v)=\deg_z(w)$. Since we assume (2) and \( w \in \BG(I) \), we have \( \frac{z}{y}w \in I \). 
As \( I \) is closed under Borel moves, we obtain 
\[
\frac{z}{y}v = \left(\frac{x}{y}\right)^{\alpha_1}\frac{z}{y}w \in I,
\]
as desired.
\end{proof}

\begin{rem}\label{r1.5} 
Let $I$ be a Borel ideal. Suppose that $u \in G(I)$ and $v \in \BG(I)$ with $\deg(u) \le \deg(v)$. Then the following statements hold.
\begin{enumerate}[\rm(1)] 
\item The monomials $u$ and $v$ satisfy condition (b) of \cref{1.2} if and only if the following holds:

(b$^{\prime}$) \quad  If  $\deg_y(u) > \deg_y(v)$, then $\deg_z(u) < \deg_z(v)$.

\item The monomials $u$ and $v$ satisfy condition (c) of \cref{1.2} if and only if the following holds:

(c$^{\prime}$) \quad $\deg_z(u) \le \deg_z(v)$.
\end{enumerate}
\end{rem}

\begin{proof}
(b) $\Longleftrightarrow$ (b$^{\prime}$): 
If $\dfrac{y}{x}v \in I$, then $v = \dfrac{x}{y} \left(\dfrac{y}{x}v\right)$, 
which contradicts the assumption that $v \in \BG(I)$. 
Hence, the case (b-1) does not occur. 
Furthermore, if $\deg_z(u) < \deg_z(v)$, then $\deg_z(v) > 0$, 
and since $I$ is closed under Borel moves, it follows that $\dfrac{y}{z}v \in I$. 
Therefore, the assertion holds.

(c) $\Longleftrightarrow$ (c$^{\prime}$): 
For the same reason as above, by the assumption that $v \in \BG(I)$, 
we have $\dfrac{z}{x}v \notin I$ and $\dfrac{z}{y}v \notin I$. 
Hence, condition (c) is equivalent to saying that $\deg_z(u) > \deg_z(v)$ does not hold.
\end{proof}

\begin{prop}\label{p1.6}
Let $I$ be a Borel ideal. The following are equivalent: 
\begin{enumerate}[\rm(1)] 
\item For all $u\in G(I)$ and $v\in \BG(I)$ with $\deg(u)\le \deg(v)$, the monomials $u$ and $v$ satisfy the conditions (b$^{\prime}$) and (c$^{\prime}$) of Definition \ref{1.2}.
\item For all $u, v\in \BG(I)$ with $\deg(u)\le \deg(v)$, the monomials $u$ and $v$ satisfy the conditions (b$^{\prime}$) and (c$^{\prime}$).
\end{enumerate}
\end{prop}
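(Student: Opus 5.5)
The implication (1) $\Longrightarrow$ (2) is immediate once we know that $\BG(I) \subseteq G(I)$. A Borel generator cannot be a proper multiple of another monomial of $I$, since otherwise it would be superfluous in the minimal set of Borel generators. So the whole work is in (2) $\Longrightarrow$ (1). Fix $u \in G(I)$ and $v \in \BG(I)$ with $\deg(u) \le \deg(v)$.

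The first step is to relate $u$ to a Borel generator. Since $I = B(w_1,\dots,w_m)$, the monomials of $I$ are multiples of Borel moves of the $w_k$. A minimal generator must itself be a Borel move of some Borel generator, with no extra multiplication, because a proper multiple of an element of $I$ is not minimal. Hence there is $w \in \BG(I)$ and non-negative integers $\alpha_1,\alpha_2,\alpha_3$ with
\[
u=\left(\frac{x}{y}\right)^{\alpha_1}\left(\frac{x}{z}\right)^{\alpha_2}\left(\frac{y}{z}\right)^{\alpha_3}w .
\]
From this we record the following:
\begin{itemize}
\item $\deg(u)=\deg(w)$, so $\deg(w)\le\deg(v)$ and hypothesis (2) applies to the pair $w,v$;
\item $\deg_z(u)=\deg_z(w)-\alpha_2-\alpha_3\le \deg_z(w)$;
\item if $\deg_z(u)=\deg_z(w)$, then $\alpha_2=\alpha_3=0$, and so $\deg_y(u)=\deg_y(w)-\alpha_1\le\deg_y(w)$.
\end{itemize}
This step, justifying that every element of $G(I)$ is a pure Borel move of a single Borel generator, is the only point needing care. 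I would phrase it via the standard description $M(I)=\bigcup_k M(B(w_k))$ and minimality.

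Condition (c$^{\prime}$) then follows by transitivity. By (2) for $w,v$ we have $\deg_z(w)\le\deg_z(v)$, so $\deg_z(u)\le\deg_z(w)\le\deg_z(v)$.

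For (b$^{\prime}$), assume $\deg_y(u)>\deg_y(v)$; we must show $\deg_z(u)<\deg_z(v)$. We split into two cases.
\begin{itemize}
\item If $\deg_z(u)<\deg_z(w)$, we are done, since $\deg_z(w)\le\deg_z(v)$ by (c$^{\prime}$) for $w,v$.
\item Otherwise $\deg_z(u)=\deg_z(w)$, so $\alpha_2=\alpha_3=0$. Then $\deg_y(w)\ge\deg_y(u)>\deg_y(v)$, and (b$^{\prime}$) applied to the Borel generators $w,v$ gives $\deg_z(w)<\deg_z(v)$. Hence $\deg_z(u)=\deg_z(w)<\deg_z(v)$.
\end{itemize}
This establishes (1).
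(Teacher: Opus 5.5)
Your proof is correct. It starts from the same decomposition as the paper: you write $u=\left(\frac{x}{y}\right)^{\alpha_1}\left(\frac{x}{z}\right)^{\alpha_2}\left(\frac{y}{z}\right)^{\alpha_3}w$ with $w\in\BG(I)$ and transfer hypothesis (2) from the pair $(w,v)$ to $(u,v)$. The transfer itself, however, is organized differently.

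The paper first rewrites the expression so that $\alpha_2=0$. It then inducts on $\alpha_1+\alpha_3$, undoing one elementary move at a time. When $\alpha_1>0$ it passes to $\frac{y}{x}u$ (same $z$-degree, $y$-degree one larger); otherwise it passes to $\frac{z}{y}u$ ($z$-degree one larger). In each case it applies the inductive hypothesis to that monomial.

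You do the bookkeeping in a single step. You use only two facts: $\deg_z(u)=\deg_z(w)-\alpha_2-\alpha_3$, and equality $\deg_z(u)=\deg_z(w)$ forces $\alpha_2=\alpha_3=0$, hence $\deg_y(u)\le\deg_y(w)$. Your case split on whether the $z$-degree drops plays the role of the two branches of the induction.

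The paper's induction has the mild advantage of reducing everything to the effect of one elementary move. Your version is shorter, needs no normalization of $\alpha_2$, and shows exactly which monotonicity properties of Borel moves are used. It also avoids a verification the paper leaves implicit: that the intermediate monomials $\frac{y}{x}u$ and $\frac{z}{y}u$ are again monomials of $I$ of the required shape, so the inductive hypothesis applies to them. You additionally justify $\BG(I)\subseteq G(I)$ for (1)$\Rightarrow$(2), which the paper simply calls clear.
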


\begin{proof}
(1) $\Longrightarrow$ (2) is clear. 

(2) $\Longrightarrow$ (1): Write 
\[
u = \left(\frac{x}{y}\right)^{\alpha_1}\left(\frac{x}{z}\right)^{\alpha_2}\left(\frac{y}{z}\right)^{\alpha_3} w
\]
for some $w \in \BG(I)$. If $\alpha_2 > 0$, noting that $\frac{x}{y}\frac{y}{z} = \frac{x}{z}$, we have 
\[
u = \left(\frac{x}{y}\right)^{\alpha_1+\alpha_2}\left(\frac{y}{z}\right)^{\alpha_3+\alpha_2} w.
\]
Hence, by replacing $\alpha_1, \alpha_2, \alpha_3$ if necessary, we may assume that $\alpha_2 = 0$. 

We prove the assertion by induction on $\alpha := \alpha_1 + \alpha_3$.  
If $\alpha = 0$, then $u = w \in \BG(I)$, and thus the assertion holds by the hypothesis (2).  
Assume that $\alpha > 0$ and that the assertion holds for $\alpha - 1$.  
Since $\alpha > 0$, we have either $\alpha_1 > 0$ or $\alpha_3 > 0$. 

First, assume that $\alpha_1 > 0$. Then 
\[
\frac{y}{x}u = \left(\frac{x}{y}\right)^{\alpha_1-1}\left(\frac{y}{z}\right)^{\alpha_3} w \in I.
\]
By the induction hypothesis, $\frac{y}{x}u$ and $v$ satisfy (b$^{\prime}$) and (c$^{\prime}$); that is, 
\[
\deg_z\!\left(\frac{y}{x}u\right) \le \deg_z(v)
\quad \text{and} \quad
\deg_y\!\left(\frac{y}{x}u\right) > \deg_y(v)
\Rightarrow
\deg_z\!\left(\frac{y}{x}u\right) < \deg_z(v).
\]
Hence, $\deg_z(u) = \deg_z\!\left(\frac{y}{x}u\right) \le \deg_z(v)$. Moreover,
\[
\deg_y(u) > \deg_y(v)
\Longrightarrow
\deg_y\!\left(\frac{y}{x}u\right) > \deg_y(v)
\Longrightarrow
\deg_z(u) = \deg_z\!\left(\frac{y}{x}u\right) < \deg_z(v).
\]
Thus, $u$ and $v$ also satisfy (b$^{\prime}$) and (c$^{\prime}$). 

Next, assume that $\alpha_3 > 0$. Then $\frac{z}{y}u \in I$, and by the induction hypothesis, $\frac{z}{y}u$ and $v$ satisfy (b$^{\prime}$) and (c$^{\prime}$); that is,
\[
\deg_z\!\left(\frac{z}{y}u\right) \le \deg_z(v)
\quad \text{and} \quad
\deg_y\!\left(\frac{z}{y}u\right) > \deg_y(v)
\Rightarrow
\deg_z\!\left(\frac{z}{y}u\right) < \deg_z(v).
\]
It follows that $\deg_z(u) < \deg_z\!\left(\frac{z}{y}u\right) \le \deg_z(v)$.  
Hence, $u$ and $v$ also satisfy (b$^{\prime}$) and (c$^{\prime}$), as desired.
\end{proof}

Now, we arrive at the main result of this section, which reduces the verification of the non-pure dual exchange property to the Borel generators.

\begin{thm}\label{t37}
Let $I \subseteq S$ be a Borel ideal. Then the following conditions are equivalent.
\begin{enumerate}[\rm(a)]
\item \( I \) satisfies the non-pure dual exchange property.
\item For all \( u, v \in \BG(I) \) with \( \deg(u) \le \deg(v) \), the monomials \( u \) and \( v \) satisfy \( \deg_z(u) \le \deg_z(v) \). Moreover, if \( \deg_y(u) > \deg_y(v) \), then \( \deg_z(u) < \deg_z(v) \).
\end{enumerate}
\end{thm}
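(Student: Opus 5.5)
The plan is to chain together the reductions already established in this section. Condition (b) of the theorem is exactly the statement that every pair $u,v\in\BG(I)$ with $\deg(u)\le\deg(v)$ satisfies (c$^{\prime}$) and (b$^{\prime}$) of \cref{r1.5}. So the theorem amounts to the chain of equivalences
\[
\text{(a)} \iff \text{\cref{p1.4}(2)} \iff \text{\cref{p1.6}(1)} \iff \text{\cref{p1.6}(2)} \iff \text{(b)}.
\]

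First, by \cref{p1.4}, $I$ has the non-pure dual exchange property if and only if every pair $u\in G(I)$, $v\in\BG(I)$ with $\deg(u)\le\deg(v)$ satisfies conditions (b) and (c) of \cref{1.2}. One point needs a remark here: $\BG(I)\subseteq G(I)$. A Borel generator that were a proper multiple of another element of $I$ could be dropped from the set of Borel generators, which contradicts minimality. Hence the pairs in \cref{p1.4}(2) make sense as pairs of minimal generators.

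Second, \cref{r1.5} applies to exactly these pairs, with $u\in G(I)$ and $v\in\BG(I)$. For such a pair, (b) is equivalent to (b$^{\prime}$) and (c) is equivalent to (c$^{\prime}$). So (a) is equivalent to condition (1) of \cref{p1.6}.

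Third, \cref{p1.6} turns this into the same condition for pairs $u,v\in\BG(I)$. Writing out (b$^{\prime}$) and (c$^{\prime}$) explicitly gives precisely statement (b) of the theorem. Both directions then follow at once, since every step above is an equivalence.

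I expect no step to be a real obstacle, because the work is done in \cref{p1.4} and \cref{p1.6}. The only thing to check carefully is that the hypotheses line up at each link: the degree ordering $\deg(u)\le\deg(v)$ must be carried through in each proposition, and the inclusion $\BG(I)\subseteq G(I)$ must hold. If that inclusion needed more justification, I would argue as follows. Suppose $w\in\BG(I)$ were divisible by some $w'\in G(I)$ with $w'\ne w$. Then $w'$ is a Borel move of some Borel generator, so $w$ lies in the Borel ideal generated by the remaining Borel generators together with that one. This contradicts uniqueness and minimality of $\BG(I)$.
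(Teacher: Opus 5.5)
Your proposal is correct and is exactly the paper's argument: the theorem is obtained by chaining \cref{p1.4}, \cref{r1.5} and \cref{p1.6}, and (b$^{\prime}$), (c$^{\prime}$) for pairs in $\BG(I)$ are literally statement (b). In your side remark on $\BG(I)\subseteq G(I)$ (which \cref{p1.4} already builds in), you also need to rule out that the proper divisor $w'$ is a Borel move of $w$ itself; this is impossible because Borel moves preserve degree and $\deg(w')<\deg(w)$, and only then does $w$ lie in the Borel ideal generated by the other Borel generators.
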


\begin{proof}
This follows from Propositions~\ref{p1.4} and~\ref{p1.6} and Remark~\ref{r1.5}.
\end{proof}
 
 \begin{rem}
 \label{count-4var} 
The conclusion of \cref{t37} does not extend to Borel ideals in more than three variables. Consider the polynomial ring $S=\KK[x_1,x_2,x_3,x_4]$
and the Borel ideal $I=B(x_1x_3^2, x_2^3x_4)$.
The Borel generators $x_1x_3^2$ and $x_2^3x_4$ satisfy the non-pure dual exchange property. However, $I$ itself does not satisfy this property.

Indeed, the minimal generating set of $I$ is
\[
\{x_1^3,x_1^2x_2,x_1^2x_3, x_1x_2^2, x_1x_2x_3, x_1x_3^2, x_2^4, x_2^3x_3, x_2^3x_4\}.
\]
Now consider the minimal generators $u=x_1x_3^2$ and $v=x_2^3x_3$. We have $\deg(u) = \deg(v)$ and $\deg_{x_3}(u) > \deg_{x_3}(v)$. However, the monomial $x_3(v/x_2)=x_2^2x_3^2 \notin I$. Therefore, $I$ does not satisfy the non-pure dual exchange property.
\end{rem}
 
It follows from \cref{t37} that:
\begin{cor}\label{principal}
Let \( I =B(u) \subseteq \KK[x, y, z] \) be a principal Borel ideal. Then \( I \) satisfies the non-pure dual exchange property.
\end{cor}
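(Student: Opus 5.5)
The plan is to apply \cref{t37} directly. When $I=B(u)$ is principal Borel, we have $\BG(I)=\{u\}$. So condition (b) of \cref{t37} only has to be checked for the single pair $(u,v)=(u,u)$. This pair trivially satisfies $\deg(u)\le\deg(u)$, so the pair is indeed in the scope of the condition.

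For this pair, the first requirement $\deg_z(u)\le \deg_z(u)$ holds with equality. The second requirement is an implication whose hypothesis $\deg_y(u)>\deg_y(u)$ is false, so it holds vacuously. Hence condition (b) of \cref{t37} is satisfied, and by the equivalence (b)$\Leftrightarrow$(a) the ideal $I$ satisfies the non-pure dual exchange property.

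The only point needing care is to confirm that $\BG(B(u))$ is exactly $\{u\}$, so that no other pairs arise. By definition, the Borel generators form the unique minimal set whose Borel moves generate $I$, and $\{u\}$ is such a set, so this is immediate. I therefore expect no real obstacle: all the substantive work lies in \cref{p1.4}, \cref{p1.6} and \cref{r1.5}, which are already established.
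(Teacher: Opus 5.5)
Your proposal is correct and matches the paper, which derives the corollary directly from \cref{t37}. With $\BG(I)=\{u\}$, the only pair to check is $(u,u)$, for which $\deg_z(u)\le\deg_z(u)$ holds and the implication is vacuous.
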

\begin{rem}\label{count-ex} 
Note that not every Borel ideal in $\KK[x, y, z]$ satisfying the non-pure dual exchange property is a principal Borel ideal. Moreover, not every Borel ideal in $\KK[x, y, z]$ satisfies the non-pure dual exchange property. The following examples illustrate these cases.
\end{rem}
\begin{ex}\label{e37}
\begin{enumerate}[\rm(a)]
\item It is easy to check that the Borel ideal \( I = B(x, y^2z) \subset \KK[x, y, z] \) satisfies the non-pure dual exchange property.

\item The Borel ideal \( I = (x^2, xy, xz, y^2)=B(xz,y^2) \subset \KK[x, y, z] \) does not satisfy the non-pure dual exchange property, as discussed in \cref{e29}.
\end{enumerate}
\end{ex}

\begin{rem}\label{tv} 
In contrast to \cref{e37}\,(b), which concerns Borel ideals in three variables, it is worth mentioning that Borel ideals in two variables always satisfy the non-pure dual exchange property. Indeed, in \( \KK[x, y] \), Borel ideals coincide with lexsegment ideals. By \cite[Lemma~4.3]{HMZ}, we have 
\[
I = (x^d,\, x^{d-1}y^{b_1},\, \ldots,\, x^{d-s}y^{b_s}) \quad \text{for integers } d > 0 \text{ and } 0 < b_1 < \cdots < b_s.
\]
Hence, the sequence \( \mathcal{S} = \{m_i\}_{i=1}^{r-1} \) takes the form 
\[
\mathcal{S} = \left\{ (1, -\alpha_1), \ldots, (1, -\alpha_{r-1}) \right\},
\]
where \( \alpha_i \geq 1 \) for all \( i \in \{1, \ldots, r-1\} \). It then follows from Corollary~\ref{okproperty} that \( I \) satisfies the non-pure dual exchange property.
\end{rem}

\begin{Acknowledgments}
The first author was supported by the Iran National Science Foundation (INSF) and the Institute for Advanced Studies in Basic Sciences (IASBS), Zanjan, Iran, under project number 4014109. This work was also partially supported by a grant from the IMU-CDC and the Simons Foundation. The first author would like to thank the second author for his kind hospitality during his visit to Osaka Institute of Technology. The second author was supported by JSPS KAKENHI Grant Number 24K16909.
\end{Acknowledgments}


\end{document}